\numberwithin{equation}{section}
\newcommand{\leqnomode}{\tagsleft@true\let\veqno\@@leqno}
\newcommand{\reqnomode}{\tagsleft@false\let\veqno\@@eqno}
\newcommand{\flag}{{\mathcal{F} \ell}}
\newcommand{\A}[2]{{A^{{(#1)}}_{{#2}}}}
\newcommand{\PP}[2]{{\varphi^{{(#1)}}_{{#2}}}}
\newcommand{\xii}[2]{{\xi^{{(#1)}}_{{#2}}}}
\newcommand{\diag}{{\text{diag}}}
\newcommand{\defi}[1]{{\textit{#1}}}
\newcommand{\C}{\mathbb{C}}
\newcommand{\Z}{\mathbb{Z}}
\newtheorem{theorem}{Theorem}[section]
\newtheorem{proposition}[theorem]{Proposition}
\newtheorem{corollary}[theorem]{Corollary}
\theoremstyle{definition}
\newtheorem{example}[theorem]{Example}
\newtheorem{definition}[theorem]{Definition}
\newtheorem{remark}[theorem]{Remark}
\begin{document}
\title[{Flag {B}ott manifolds of general {L}ie type}]{Flag {B}ott manifolds of general {L}ie type 
and their equivariant cohomology rings}

\author{Shizuo Kaji}
\address{Institute of Mathematics for Industry, 
	Kyushu University, 
	Fukuoka 819-0395, 
	Japan}
\email{skaji@imi.kyushu-u.ac.jp}

\author{Shintar\^{o} Kuroki}
\address{Faculty of Science, Department of Applied Mathematics, 
	Okayama	University of Science, 
	1-1 Ridai-cho Kita-ku Okayama-shi Okayama 700-0005, 
	JAPAN}
\email{kuroki@xmath.ous.ac.jp}

\author{Eunjeong Lee}
\address{Center for Geometry and Physics, 
	Institute for Basic Science (IBS), 
	Pohang 37673, 
	Korea}
\email{eunjeong.lee@ibs.re.kr}

\author{Dong Youp Suh}
\address{Department of Mathematical Sciences,  
	KAIST, 
	291 Daehak-ro, Yuseong-gu, Daejeon 34141, 
	South Korea}
\email{dysuh@kaist.ac.kr}

\thanks{Kaji was partially supported by KAKENHI, Grant-in-Aid for Scientific Research (C) 18K03304.
	Kuroki was supported by JSPS KAKENHI Grant 	Number 17K14196.
Lee was supported by IBS-R003-D1.
	Suh was supported by Basic Science 
	Research Program through the National Research Foundation of Korea (NRF) 
	funded by the Ministry of  Science, ICT \& Future Planning 
	(No.~2016R1A2B4010823).
}
\subjclass[2010]{Primary: 55R10, 14M15; Secondary: 57S25}
\keywords{flag Bott towers of general Lie type, flag Bott manifold, generalized Bott manifold, flag manifold, equivariant cohomology}

\begin{abstract}
	In this article we introduce flag Bott manifolds of general Lie type as the total spaces of iterated flag bundles. 
	They generalize the notion of flag Bott manifolds and generalized Bott manifolds, and admit nice torus actions.
	We calculate the torus equivariant cohomology rings of flag Bott manifolds of general Lie type.
\end{abstract}

\maketitle
%

\section{Introduction}
A \defi{Bott tower} $\{M_j \mid 0 \leq j \leq m\}$ is a sequence of $\C P^1$-fibrations $\C P^1 \hookrightarrow M_j \to M_{j-1}$ such that $M_j$ is the induced projective bundle of the sum of two complex line bundles over $M_{j-1}$. Each manifold $M_j$ is called \defi{a $j$-stage Bott manifold} and it is known that $M_j$ is a non-singular projective toric variety. 
On the other hand, a flag manifold is the orbit space $G/P$ of a complex Lie group $G$ divided by a parabolic subgroup $P$.
A flag manifold is known to be a non-singular projective variety having a nice torus action.

These two families of spaces are closely related by the Bott-Samelson resolution (see~\cite{Dem, GrKa94}).
Both families have been actively studied as spaces with nice torus actions, and have served as a test ground for various theories and problems. \emph{Schubert calculus} studies the cohomology of flag manifolds, in which topology, algebraic geometry, combinatorics, and representation theory meet together (see \cite{Ku} for a survey).
On the other hand, \emph{the cohomological rigidity problem of quasi-toric manifolds} may be regarded as one of the essential problems in toric topology,
and
some affirmative results are known for Bott manifolds (see~\cite{ Choi15,ChMa12, CMM15, Is12}).

There are two known natural generalizations of Bott manifolds: flag Bott manifolds, and generalized Bott manifolds. The \defi{flag Bott manifolds} extend the relation between Bott manifolds and Bott--Samelson manifolds. We refer the readers to~\cite{KLSS18} for the definition of flag Bott manifolds, and to~\cite{FLS18} for this enlarged relation. And the \defi{generalized Bott manifolds} are toric manifolds studied in~\cite{CMS-Trnasaction, CPS12, MaSu08}.

In this note, we introduce \defi{flag Bott manifolds of general Lie type} which simultaneously generalize 
flag manifolds, flag Bott manifolds, and generalized Bott manifolds.
We give two closely related descriptions of the flag Bott manifold of general Lie type; 
as the total space of an iterated flag bundle (Definition \ref{def_fBT_pullback})
 and as an orbit space of a Lie group (Definition \ref{def_fBT_quotient}).
We see a torus acts on the flag Bott manifold in a natural manner (Definition~\ref{def:canonical_action_of_T}). Moreover, we determine its Borel equivariant cohomology ring.
Theorem \ref{thm_equiv_cohomology_ring_of_FBM} unifies the known formulae for the equivariant cohomology 
of flag manifolds, flag Bott manifolds, and generalized Bott manifolds.

\section{Flag bundle and its cohomology}

Let $K$ be a compact connected Lie group and $T \subset K$ be a maximal torus of $K$. 
Let $Z \subset K$ be the centralizer in $K$ of a circle subgroup of $T$. 
Then, $Z$ is known to be connected. 
	Indeed, for any element $g\in Z$ consider the subgroup $H$ which is generated by 
	$g$ and the circle subgroup. Since $H$ is abelian, it is contained in some torus
	which is contained in $Z$. Therefore, $g$ is in the identity component of $Z$.
We denote by $W$ (resp. $W(Z)$) the Weyl group of $K$ (resp. $Z$).
The space $K/Z$ of left-cosets is called the \textit{generalized flag manifold},
and there exists the universal flag bundle $K/Z \hookrightarrow BZ \to BK$. For any map $f \colon X \to BK$ from a topological space $X$, we have the pull-back bundle $K/Z \hookrightarrow F_f(Z) \to X$, which fits in the diagram
\begin{equation}\label{eq_associated_flag_bundle}
\begin{tikzcd}
K/Z \arrow[r,equal] \arrow[d, hook] & K/Z \arrow[d, hook] \\
F_f(Z) \dar \arrow[r, "\tilde{f}"] & BZ \dar  \\
X \arrow[r, "f"]& BK
\end{tikzcd}
\end{equation}
The pull-back bundle is called the \defi{flag bundle over $X$ associated to the classifying map $f$ with fiber $K/Z$}.

\begin{example}\label{ex:flag}
Let $K=U(n)$ and $f: X\to BU(n)$.
We denote by $U(1)^{n}$ the set of all diagonal matrices in $K$.
It is well-known that $U(1)^{n}$ is a maximal torus in $K$.
We exhibit three examples of the associated flag bundles over~$X$.

Take the circle subgroup $\{\textrm{diag}(t,\dots,t,t^2) \mid t \in S^1\} \subset U(1)^{n}$.
Then its centralizer $Z_{1}$ is the group of the block diagonal matrices $U(n-1)\times U(1)\subset K$.
The associated flag bundle
$F_f(Z_{1})$ is isomorphic to
the projective bundle $\mathbb{P}(E)$
associated to the complex vector bundle $E$ classified by $f$.

We next take the circle subgroup $\{\textrm{diag}(t,t^2,\dots,t^n) \mid t \in S^1\} \subset U(1)^{n}$.
Then its centralizer $Z_{2}$ coincides with the group of the diagonal matrices~$U(1)^{n}$.
The associated flag bundle
$F_f(Z_{2})$ is isomorphic to the full flag bundle 
\[
\flag(E)=\{ V_0\subset V_1 \subset V_2 \subset \cdots \subset V_n=E \mid \mathrm{rank}(V_i)=i \}
\]
associated to $E$.

More generally, for positive integers $p_1,\dots,p_k > 0$ with $p_1+\cdots+p_k = n$, the centralizer $Z_{3}$ of the circle subgroup
	\[
	\{\textrm{diag}(\underbrace{t,\dots,t}_{p_1},\underbrace{t^2,\dots,t^2}_{p_2},\dots,\underbrace{t^k,\dots,t^k}_{p_k})\mid t \in S^1\} \subset U(1)^{n}
	\]
	is the group of the block diagonal matrices $U(p_1) \times \cdots \times U(p_k) \subset U(n)$. 
The associated flag bundle
$F_f(Z_{3})$ is isomorphic to the partial flag bundle
	\[
	\{V_0 \subset V_1 \subset V_2 \subset \cdots \subset V_k \subset E \mid \mathrm{rank}(V_i) = p_1+\cdots+p_i \}
	\]
	associated to $E$.
\end{example}

The cohomology of $F_f(Z)$ can easily be computed for some coefficient rings.
A prime $p$ is said to be a torsion prime of $K$ if $H_{\ast}(K;\mathbb{Z})$ has $p$-torsion. 
Let $R$ be a PID in which torsion primes of $K$ are invertible.
For example, if $K$ is $U(n)$ or $Sp(n)$, then we may take $R$ to be $\mathbb{Z}$.
When $K$ is simply-connected, the torsion primes are summarized in Table~\ref{table_torsion_prime} (see \cite[\S 2.5]{borel1961sous}).
Note that if $p$ is not a torsion prime of $K$, it is not of any circle centralizer $Z$ as well.

\begin{table}[ht]
	\begin{tabular}{|c|ccccccccc|}
		\hline
		Lie type & $A$ & $B$ & $C$ & $D$ & $G_2$ & $F_4$ & $E_6$ & $E_7$ & $E_8$\\
		\hline
		torsion primes & $\emptyset$ & $2$ & $\emptyset$ & $2$ & $2$ & $2,3$ & $2,3$ & $2,3$ & $2,3,5$ \\
		\hline
	\end{tabular}
	\caption{Torsion primes.}
	\label{table_torsion_prime}
\end{table}

\begin{proposition}\label{prop_cohomology_ring_of_associated_flag_bundle}
	Let $R$ be a PID in which the torsion primes of $K$ are invertible. 
	Then, we have a ring isomorphism
	\[
	H^{\ast}(F_f(Z);R) \cong H^{\ast}(X;R) \otimes_{H^{\ast}(BK;R)} H^{\ast}(BZ;R),
	\]
	where $H^{\ast}(X;R)$ has the $H^{\ast}(BK;R)$-module structure induced by the map~$f$ and $H^*(BZ;R)$ also has the natural $H^*(BK;R)$-module structure induced by the classifying map of the inclusion $Z\to K$.
\end{proposition}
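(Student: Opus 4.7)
The plan is to apply the Eilenberg--Moore spectral sequence to the pullback square in~\eqref{eq_associated_flag_bundle}, exploiting the fact that under the torsion-prime hypothesis on $R$ the ring $H^{\ast}(BZ;R)$ is \emph{free} as a module over $H^{\ast}(BK;R)$. Since $K$ is connected, $BK$ is simply-connected, so the Eilenberg--Moore spectral sequence for the fibration $K/Z\hookrightarrow BZ\to BK$ pulled back along $f$ converges to $H^{\ast}(F_f(Z);R)$ with
\[
E_2^{p,q} = \mathrm{Tor}^{p,q}_{H^{\ast}(BK;R)}\bigl(H^{\ast}(X;R),\, H^{\ast}(BZ;R)\bigr).
\]
Once the freeness is established, the higher $\mathrm{Tor}$ groups vanish, the spectral sequence collapses onto the column $p=0$, and its multiplicative structure then supplies the claimed ring isomorphism; there are no extension problems since the Eilenberg--Moore filtration becomes trivial.

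The main algebraic input is therefore the freeness of $H^{\ast}(BZ;R)$ over $H^{\ast}(BK;R)$. Here I would invoke Borel's theorem: under the assumption that the torsion primes of $K$ are invertible in $R$, the inclusion $T\hookrightarrow K$ induces an isomorphism $H^{\ast}(BK;R)\cong H^{\ast}(BT;R)^{W}$, and similarly $H^{\ast}(BZ;R)\cong H^{\ast}(BT;R)^{W(Z)}$, with $H^{\ast}(BT;R)$ a polynomial algebra over $R$ on degree-two generators. Since the orders of $W(Z)\subseteq W$ are units in $R$, a Chevalley--Shephard--Todd-style argument (applied to the nested pair of reflection groups, or equivalently a transfer/averaging argument) shows that $H^{\ast}(BT;R)^{W(Z)}$ is free over $H^{\ast}(BT;R)^{W}$ of rank $|W/W(Z)|$, giving the needed freeness; as a sanity check, this rank agrees with the Poincar\'e polynomial rank of $H^{\ast}(K/Z;R)$.

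The step I expect to require the most care is the simultaneous validity of the Borel-type identifications and the convergence of the Eilenberg--Moore spectral sequence over a PID rather than a field. Both rest on showing that the torsion-prime hypothesis forces the Serre spectral sequences for the universal bundles $K\to EK\to BK$ and $Z\to EZ\to BZ$ to behave as they do rationally, so that $H^{\ast}(K;R)$ and $H^{\ast}(Z;R)$ are exterior algebras on odd-degree primitive generators and $H^{\ast}(BT;R)$ is a genuine polynomial $R$-algebra. Once these standard but delicate facts are pinned down, the flatness needed for Eilenberg--Moore convergence and the collapse at $E_2$ follow formally, and the ring isomorphism in the statement is an immediate consequence; alternatively, a Leray--Hirsch argument using any $H^{\ast}(BK;R)$-module basis of $H^{\ast}(BZ;R)$ and naturality with respect to the universal case $X=BK$ would give the same conclusion.
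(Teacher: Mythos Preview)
Your overall plan via the Eilenberg--Moore spectral sequence is sound in principle, and your closing remark about Leray--Hirsch is in fact exactly the route the paper takes. But there is a real gap in your justification of the key freeness step. You write ``Since the orders of $W(Z)\subseteq W$ are units in $R$'' and then invoke an averaging/Chevalley--Shephard--Todd argument. That premise is false: inverting the torsion primes of $K$ does \emph{not} make $|W|$ a unit. Already for $K=SU(n)$ there are no torsion primes, so $R=\mathbb{Z}$ is allowed, while $|W|=n!$; likewise $K=Sp(n)$ has no torsion primes but $|W|=2^n n!$. Your transfer/averaging argument is therefore unavailable over such $R$, and the freeness of $H^{\ast}(BT;R)^{W(Z)}$ over $H^{\ast}(BT;R)^{W}$ is not established by what you wrote.

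The freeness does hold, but the correct input is precisely the one the paper uses: Borel's theorem (\cite[\S 4.2]{borel1961sous}) that, with torsion primes inverted, $H^{\ast}(K/Z;R)$ is a free $R$-module and the restriction $H^{\ast}(BZ;R)\to H^{\ast}(K/Z;R)$ is surjective. This feeds Leray--Hirsch for the universal fibration $K/Z\hookrightarrow BZ\to BK$, giving the freeness of $H^{\ast}(BZ;R)$ over $H^{\ast}(BK;R)$ you need. The paper then simply applies Leray--Hirsch again to the pulled-back bundle $F_f(Z)\to X$ and observes that the resulting map from the tensor product is a ring homomorphism which is an $H^{\ast}(X;R)$-module isomorphism, hence a ring isomorphism. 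Once you substitute Borel's surjectivity for your averaging step, either your Eilenberg--Moore argument or this direct Leray--Hirsch argument finishes the proof; the latter is more elementary and sidesteps the PID-convergence issues you yourself flag.
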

\begin{proof}    
	Taking the cohomology of the lower square of \eqref{eq_associated_flag_bundle} gives rise to a homomorphism
	\[
	\varphi: H^{\ast}(X;R) \otimes_{H^{\ast}(BK;R)} H^{\ast}(BZ;R)\to H^{\ast}(F_f(Z);R),
	\]
	which we will show is an isomorphism.
	There are elements $\alpha_i\in H^{\ast}(BZ;R)$ which restrict to a basis of $H^{\ast}(K/Z;R)$ by \cite[\S 4.2]{borel1961sous}.
	By the Leray--Hirsch theorem we see 
	$H^{\ast}(BZ;R)$ is generated by $\{\alpha_i\}$ over $H^{\ast}(BK;R)$ and 
	$H^{\ast}(F_f(Z);R)$ is generated by $\{\tilde{f}^*\alpha_i\}$ over $H^{\ast}(X;R)$.
	Since $\varphi$ is a ring homomorphism which is $H^{\ast}(X;R)$-module isomorphism, it is a ring isomorphism.
\end{proof}

\begin{example}
Let $K = U(3)$ and $W\cong \mathfrak{S}_3\cong \langle s_1, s_2 \rangle$.
Here we may regard $s_{1}$ as the permutation matrix of the 1st and the 2nd coordinates and $s_{2}$ as that of the 2nd and the 3rd coordinates.
Suppose that $Z$ is the centralizer subgroup of the circle subgroup $\{\textrm{diag}(t,1,1) \mid t \in S^1\} \subset U(1)^{3}$. 
Note that the Lie algebra of this circle subgroup is fixed by the permutation $s_{2}$, and 
$Z$ is isomorphic to the block diagonal matrices $U(1)\times U(2)$,
thus, we have $K/Z \cong \C P^2$, and 
\[
H^*(BK;\Z) \cong \Z[x_1,x_2,x_3]^{\mathfrak{S}_3}, \quad
H^*(BZ; \Z) \cong \Z[x_1,x_2,x_3]^{W(Z)}
\]
where $W(Z) = \langle s_2 \rangle \subset \mathfrak{S}_3$. 
Let $f: X\to BU(3)$ be a map and 
$E \to X$ be the complex vector bundle of rank $3$ classified by $f$.
The induced projective bundle $\mathbb{P}(E)$ coincides with~$F_f(Z)$.
Hence, by applying Proposition~\ref{prop_cohomology_ring_of_associated_flag_bundle}, 
we can compute its cohomology as follows:
\begin{align*}
H^{\ast}(\mathbb{P}(E);\Z) &\cong H^{\ast}(X;\Z) \otimes_{H^{\ast}(BK;\Z)} H^{\ast} (BZ;\Z) \\
&\cong H^{\ast}(X;\Z) \otimes_{\Z[x_1,x_2,x_3]^{\mathfrak{S}_3}} \Z[x_1,x_2,x_3]^{\langle s_2\rangle} \\
& \cong (H^{\ast}(X;\Z) \otimes \Z[x_1, x_2+x_3, x_2x_3])/ I, 
\end{align*}
where $I = \langle x_1+x_2+x_3 -c_1(E), x_1x_2 + x_2x_3 + x_3x_1- c_2(E), x_1x_2x_3- c_3(E)\rangle $.
Observe the following relations given by the ideal $I$:
\[
\begin{split}
x_2 + x_3 &= c_1(E) - x_1, \\
x_2x_3 &= c_2(E) - x_1(x_2+x_3) = c_2(E) - x_1(c_1(E) - x_1) = c_2(E) - x_1c_1(E) + x_1^2, \\
x_1x_2x_3 &= c_3(E).
\end{split}
\]
Denoting $x_1$ by $x$ and eliminating $x_{2}$ and $x_{3}$, we get the following formula:
\[
H^{\ast}(\mathbb{P}(E);\Z) \cong H^{\ast}(X;\Z)[x]/\langle x^3 - x^2 c_1(E) + xc_2(E) - c_3(E) \rangle.
\]
Similarly, for any complex vector bundle $E \to X$ of rank $n+1$, we have that:
\begin{equation}\label{eq:BH}
H^{\ast}(\mathbb{P}(E);\Z ) \cong H^{\ast}(X;\Z) [x] \bigg/ \left\langle \sum_{k=1}^{n+1} (-1)^k x^{n+1-k}c_{k}(E) \right\rangle,
\end{equation}
which recovers the well-known Borel--Hirzebruch formula (see~\cite[Chapter V, \S 15]{BorelHirzebruch58}).
\end{example}

As a corollary of Proposition ~\ref{prop_cohomology_ring_of_associated_flag_bundle}, 
we obtain a quick proof of~\cite[Proposition 21.17 and Remarks 21.18, 21.19]{BoTu82}.
\begin{corollary}
	Let $\flag(E) \to X$ be the full flag bundle associated to an $(n+1)$-dimensional complex vector bundle. 
	Then,  we have that
	\[
	H^{\ast}(\flag(E); \mathbb{Z}) \cong H^{\ast}(X;\mathbb{Z})[x_1,\dots,x_{n+1}] \bigg/
	\left\langle \prod_{k=1}^{n+1} (1+x_k) - c(E) \right\rangle.
	\]
\end{corollary}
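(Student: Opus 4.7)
The plan is to apply Proposition \ref{prop_cohomology_ring_of_associated_flag_bundle} directly, specializing to the setup of the full flag bundle case described in Example \ref{ex:flag} (the centralizer $Z_{2}$). Specifically, I would take $K = U(n+1)$ and $Z = U(1)^{n+1}$, so that $K/Z$ is the full flag variety and $F_{f}(Z) = \flag(E)$, where $f \colon X \to BU(n+1)$ is the classifying map of $E$. Since $U(n+1)$ is of Lie type $A$, Table~\ref{table_torsion_prime} tells us that there are no torsion primes, so Proposition~\ref{prop_cohomology_ring_of_associated_flag_bundle} applies over $R = \mathbb{Z}$ and yields
\[
H^{\ast}(\flag(E);\mathbb{Z}) \cong H^{\ast}(X;\mathbb{Z}) \otimes_{H^{\ast}(BU(n+1);\mathbb{Z})} H^{\ast}(B(U(1)^{n+1});\mathbb{Z}).
\]

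Next, I would identify the cohomology rings appearing on the right hand side. Writing $x_{i} \in H^{2}(B(U(1)^{n+1});\mathbb{Z})$ for the first Chern class of the $i$-th tautological line bundle, we have $H^{\ast}(B(U(1)^{n+1});\mathbb{Z}) = \mathbb{Z}[x_{1},\dots,x_{n+1}]$, while $H^{\ast}(BU(n+1);\mathbb{Z}) = \mathbb{Z}[c_{1},\dots,c_{n+1}]$, with $c_{k}$ the universal $k$-th Chern class. The map induced by the inclusion $U(1)^{n+1} \hookrightarrow U(n+1)$ sends $c_{k}$ to the elementary symmetric polynomial $e_{k}(x_{1},\dots,x_{n+1})$, while the classifying map $f$ sends $c_{k}$ to the Chern class $c_{k}(E) \in H^{\ast}(X;\mathbb{Z})$.

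Substituting these descriptions, the tensor product becomes $H^{\ast}(X;\mathbb{Z})[x_{1},\dots,x_{n+1}]$ modulo the ideal generated by the relations $e_{k}(x_{1},\dots,x_{n+1}) - c_{k}(E) = 0$ for $k = 1,\dots,n+1$. The last step is then a purely formal repackaging: comparing the graded components of both sides of $\prod_{k=1}^{n+1}(1+x_{k}) - c(E)$ shows that this single total relation is equivalent to the system of relations above. This yields the stated presentation.

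Since the argument is simply an unpacking of Proposition~\ref{prop_cohomology_ring_of_associated_flag_bundle} together with standard identifications from Chern-Weil theory, there is no essential obstacle; the only thing to double-check is that the inclusion $Z \hookrightarrow K$ really does induce the symmetrization map on polynomial generators, which is a classical fact coming from the splitting principle.
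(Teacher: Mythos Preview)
Your proposal is correct and follows essentially the same approach as the paper: both apply Proposition~\ref{prop_cohomology_ring_of_associated_flag_bundle} with $K=U(n+1)$ and $Z=T=U(1)^{n+1}$, then identify $H^*(BT;\Z)\cong\Z[x_1,\dots,x_{n+1}]$ and $H^*(BU(n+1);\Z)\cong\Z[c_1,\dots,c_{n+1}]$ with $c_i$ mapping to the elementary symmetric polynomials and $f^*(c_i)=c_i(E)$. Your write-up is in fact slightly more explicit than the paper's in spelling out why the tensor product yields the stated presentation and why the single total-class relation is equivalent to the degree-by-degree relations.
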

\begin{proof}
	The flag bundle $\flag(E) \to X$ with fiber $\flag(n+1) := \flag(\C^{n+1})$ fits into the following diagram:
	\[
	\begin{tikzcd}
	\flag(n+1) \arrow[r, equal] \arrow[d, hook]& \flag(n+1) \arrow[d, hook]\\
	\flag(E) \dar \rar& BT \dar \\
	X \arrow[r, "f"]& BU(n+1)
	\end{tikzcd}
	\]
	where $f$ is the classifying map of the vector bundle $E \to X$ and $T$ is a maximal torus in $U(n+1)$. Applying Proposition~\ref{prop_cohomology_ring_of_associated_flag_bundle}, we have that
	\[
	H^{\ast}(\flag(E); \mathbb{Z}) \cong H^{\ast}(X; \mathbb{Z}) \otimes_{H^{\ast}(BU(n+1);\Z)} H^{\ast}(BT; \mathbb{Z}).
	\]
We identify $H^{\ast}(BT;\mathbb{Z})\cong \Z[x_1,x_2,\ldots,x_{n+1}]$ and $H^{\ast}(BU(n+1);\Z) \cong \Z[c_1,c_2,\ldots,c_{n+1}]$, where $c_i$ is the $i$th elementary symmetric polynomial in $x_{1}, \ldots,x_{n+1}$ for any~$i$. 
The assertion follows from the fact $f^*(c_i)=c_i(E)$.
\end{proof}


\section{Flag Bott manifold of general Lie type}
\label{sec_flag_Bott_manifolds}

In this section, we introduce the main object of our study, \emph{flag Bott towers of general Lie type}.
We give two closely related definitions of a flag Bott tower of general Lie type, and prove that they are equivalent when relevant groups are simply-connected.
For $1 \leq j \leq m$, let $K_j$ be a compact connected Lie group, $T_j \subset K_j$ be a maximal torus, and $Z_j \subset K_j$ be the centralizer of a circle subgroup of $T_j$. 

\begin{definition}\label{def_fBT_pullback}
	An \defi{$m$-stage flag Bott tower $F_{\bullet} = \{F_j \mid 0 \leq j \leq m\}$ of general Lie type \textup{(}\textup{or} an $m$-stage flag Bott tower\textup{)} associated to $(K_{\bullet}, Z_{\bullet}) = \{(K_j, Z_j) \mid 1 \leq j \leq m\}$}  is defined recursively as follows:
	\begin{enumerate}
		\item $F_0$ is a point.
		\item $F_j$ is the flag bundle over $F_{j-1}$ with fiber $K_j/Z_j$ associated to a map 
		\[
		f_j \colon F_{j-1} \to B K_j,
		\]
		where $f_j$ factors through $BT_j$.
	\end{enumerate}
\end{definition}
The requirement for $f_j$ to factor through $BT_j$ means that we consider those bundles which are the sum of line bundles.

Two flag Bott towers $F_{\bullet}$ and $F_{\bullet}'$ are \defi{isomorphic} if there is a collection of diffeomorphisms 
$F_j \to F_j'$ which commute with the projections $p_j \colon F_j \to F_{j-1}$ and $p_j' \colon F_j' \to F_{j-1}'$ for all $1 \leq j \leq m$.

Note that the fiber of each stage is a flag manifold, which admits a cell decomposition involving only even dimensional cells.
Moreover, the total space $E$ of a fiber bundle $F \hookrightarrow E \to B$ has the structure of a CW-complex whose cells are
	the product of those in the base $B$ and the fiber $F$ (see, for instance, \cite[p.105]{Novikov}). 
More precisely, for the fiber bundle $F \hookrightarrow E \to B$, the total space $E$ can be decomposed into the pull backs of all cells of $B$. The pull-back of a cell $c \subset B$ is homeomorphic to the product $F\times c$ (because $c$ is contractible), and $F\times c$ is also decomposed into the product of cells of $F$ and $c$.
Because the product of two disks is homeomorphic to the disk, this gives a cell decomposition of the total space $E$.
Therefore, a flag Bott manifold $F_{j}$ admits a cell decomposition involving only even dimensional cells as well, and in particular,
it is simply-connected.

\begin{example}\label{example_fBT}
Let $(K_j,Z_j)=(U(n_j+1),T^{n_j+1})$, where $T^{n_{j}+1}$ is a maximal torus of $U(n_j+1)$.
We have $K_j/Z_j \cong \flag(n_j+1)$ for each $j$, and we get an \defi{$m$-stage flag Bott tower} which is introduced in \cite[Definition 2.1]{KLSS18}. An $m$-stage flag Bott tower $F_{\bullet} = \{F_j \mid 0 \leq j \leq m\}$ is defined to be
an iterated bundle
	\[
	F_m \to F_{m-1} \to \cdots \to F_1 \to F_0 = \{\textup{a point}\},
	\]
	of manifolds $F_j = \flag\left(\bigoplus_{k=1}^{n_j+1} \xii{j}{k} \right)$
	where $\xii{j}{k}$ is a complex 
	line bundle over $F_{j-1}$ for each $1 \leq k \leq n_j+1$
	and $1\leq j \leq m$.
Since the flag Bott tower in Definition~\ref{def_fBT_pullback} is the generalization of flag Bott tower in \cite{KLSS18},
we call the latter a \defi{full flag Bott tower of type A} in this paper whenever we need to specify them.
\end{example}
\begin{example}
Let $(K_j,Z_j)=(U(n_j+1),U(n_j)\times U(1))$.
	We have {$K_j/Z_j \cong \C P^{n_j}$} for each $j$, and 
	we get an \defi{$m$-stage generalized Bott tower} 
	which is defined in~\cite{ CMS10-quasitoric, CMS-Trnasaction} to be
	an iterated bundle
	\[
	B_m \to B_{m-1} \to \cdots \to B_1 \to B_0 = \{\textup{a point}\},
	\]
	of manifolds $B_j = \mathbb{P}\left(\bigoplus_{k=1}^{n_j+1} \xii{j}{k} \right)$
	where $\xii{j}{k}$ is a complex 
	line bundle over $B_{j-1}$ for each $1 \leq k \leq n_j+1$
	and $1\leq j \leq m$.
\end{example}

\begin{remark}
		Iterated flag bundles have been studied well in the literature.
		For instance, in~\cite{He}, the author studies the cohomology rings of 
		iterated flag bundles associated to vector bundles
		which do not necessarily split into line bundles. He called them 
		\textit{Bott tower of flag manifolds}.
		A Bott tower of flag manifolds admits a torus action if the base space 
		admits one and the vector bundle is equivariant.
		However, the essential point of our construction is to restrict to 
		vector bundles which are the sum of line bundles.
		In this case, we obtain two views of our flag Bott manifold of general 
		Lie type (in this section \S\ref{sec_flag_Bott_manifolds}) and
		can introduce the bigger torus action (\S\ref{sec_equi_cohomology_ring_flag_Bott_manifolds}), which are non-trivial.

\end{remark}


We now give the second definition of  flag Bott tower of general Lie type  in the form of an orbit space similarly to the full flag Bott tower of type $A$ case (see~\cite[\S 2.2]{KLSS18}) as follows. 
\begin{definition}\label{def_fBT_quotient}
Let $(K_{\bullet}, Z_{\bullet}) = \{(K_j, Z_j) \mid 1 \leq j \leq m\}$.
	Given a family of homomorphisms $\{ \PP{\ell}{j} \colon Z_{j} \to T_{\ell} \mid 1 \leq j < \ell \leq m\}$,
	the space $F_m^{\varphi}$ is defined as the orbit space
	\[
	F_m^{\varphi} := (K_1 \times \cdots \times K_m)/(Z_1 \times \cdots \times Z_m),
	\]
	where $(z_{1},\ldots,z_{m})\in Z_1 \times \cdots \times Z_m$ acts on $(g_{1},\ldots,g_{m})\in K_1 \times \cdots \times K_m$ from the right by 
	\begin{equation}\label{eq:quotient}
		\begin{split}
			(g_1,\dots,g_m) &\cdot (z_1,\dots,z_m) \\
			&:= (g_1z_1, \PP{2}{1}(z_1)^{-1}g_2z_2, \prod_{j=1}^2\PP{3}{j}(z_j)^{-1} g_3z_3,\dots, \\
			&\quad \qquad \prod_{j=1}^{m-1}\PP{m}{j}(z_j)^{-1} g_mz_m).
			\end{split}
			\end{equation}
\end{definition}
This action is easily seen to be free, and hence, $F_m^{\varphi}$ is a smooth manifold.
Moreover, the space $F_m^{\varphi}$ has the structure of $K_m/Z_m$-fiber bundle over $F_{m-1}^{\varphi}$ whose classifying map $f_m$ is given by the composition
\[
\begin{tikzcd}
F_{m-1}^{\varphi} \arrow[r, ""] & B\left(\prod_{j=1}^{m-1} Z_j\right) \arrow[rr, "B (\prod_{j=1}^{m-1} \PP{m}{j})"] && BT_m \arrow[r, "B \iota"] & BK_m.
\end{tikzcd}
\]
Here the first map is the classifying map of the principal $\prod_{j=1}^{m-1} Z_j$-bundle
\[
\prod_{j=1}^{m-1} Z_j \hookrightarrow \prod_{j=1}^{m-1} K_j \to F_{m-1}^{\varphi},
\]
and $\iota:T_{m}\to K_{m}$ is the inclusion.
Therefore, $F_{\bullet}^{\varphi} := \{F_j^{\varphi} \mid 0 \leq j \leq m \}$ is an $m$-stage flag Bott tower of general Lie type associated to $(K_{\bullet}, Z_{\bullet})$.

Conversely, when $K_j$ are simply-connected for all $1\le j\le m$, 
we claim that every flag Bott tower of general Lie type associated to $(K_{\bullet}, Z_{\bullet})$ can be described as the orbit space as in Definition~\ref{def_fBT_quotient}, which implies that two definitions of
flag Bott tower are equivalent.
\begin{proposition}\label{prop_equiv_def_of_fBT}
	Let $F_{\bullet} = \{F_j \mid 0 \leq j \leq m \}$ be an $m$-stage flag Bott tower of general Lie type associated to $(K_{\bullet}, Z_{\bullet})$, where $K_j$ are simply-connected for all $1\le j\le m$. Then there exists a family of homomorphisms $\{ \PP{\ell}{j} \colon Z_{j} \to T_{\ell} \mid 1 \leq j < \ell \leq m \}$ such that $F_{\bullet}$ and $F_{\bullet}^{\varphi}$ are isomorphic as flag Bott towers. 
\end{proposition}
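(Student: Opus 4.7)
The plan is to induct on $m$, with the case $m=0$ being vacuous. For the inductive step, apply the statement to the $(m-1)$-stage truncation of $F_\bullet$ to obtain $\{\varphi^{(\ell)}_j \mid 1\le j<\ell\le m-1\}$ together with an isomorphism $F_{m-1}\cong F_{m-1}^\varphi$. What remains is to produce homomorphisms $\varphi^{(m)}_j\colon Z_j\to T_m$ for $1\le j<m$ so that the classifying map of the $K_m/Z_m$-bundle $F_m^\varphi\to F_{m-1}^\varphi$ agrees up to homotopy with $f_m\colon F_{m-1}\to BK_m$ under the identification of the bases. Since $f_m$ factors through $BT_m$ by Definition~\ref{def_fBT_pullback} via a specified lift $\tilde f_m\colon F_{m-1}\to BT_m$, and since the classifying map of $F_m^\varphi$ is built by composing $F_{m-1}^\varphi\to B(\prod_j Z_j)\xrightarrow{B(\prod_j\varphi^{(m)}_j)}BT_m\to BK_m$ as described after Definition~\ref{def_fBT_quotient}, the problem reduces to realizing $\tilde f_m$ (transported along $F_{m-1}\cong F_{m-1}^\varphi$) as such a composition for a suitable choice of $\varphi^{(m)}_j$.

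The technical heart of the argument is a cohomology computation via the Serre spectral sequence of the fibration
\[
\prod_{j=1}^{m-1}K_j \longrightarrow F_{m-1}^\varphi \longrightarrow B\Big(\prod_{j=1}^{m-1}Z_j\Big).
\]
Because each $K_j$ is compact and simply-connected, $\pi_1(K_j)=\pi_2(K_j)=0$, hence $H^1(K_j;\Z)=H^2(K_j;\Z)=0$, and by K\"unneth the same vanishing holds for the fiber $\prod_j K_j$. Thus $E_2^{0,1}=E_2^{1,1}=E_2^{0,2}=0$ and no differential reaches $E_r^{2,0}$, so the edge homomorphism
\[
H^2\Big(B\prod_j Z_j;\Z\Big) \xrightarrow{\ \sim\ } H^2(F_{m-1}^\varphi;\Z)
\]
is an isomorphism. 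Since $BT_m$ is a product of copies of $K(\Z,2)$, taking $\dim T_m$-tuples yields a bijection $[B\prod_j Z_j,BT_m]\cong [F_{m-1}^\varphi,BT_m]$. Each $Z_j$ is connected, so Hurewicz and the universal coefficient theorem give $[BZ_j,BU(1)]\cong H^2(BZ_j;\Z)\cong \mathrm{Hom}(Z_j,U(1))$, and assembling the $\dim T_m$ coordinates and the $m-1$ factors produces
\[
[F_{m-1}^\varphi,BT_m] \;\cong\; \prod_{j=1}^{m-1}\mathrm{Hom}(Z_j,T_m).
\]
Applying this identification to $\tilde f_m$ delivers the desired homomorphisms $\varphi^{(m)}_j$.

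With these $\varphi^{(m)}_j$ chosen, the classifying map of $F_m^\varphi\to F_{m-1}^\varphi$ is by construction homotopic to $f_m$ under $F_{m-1}\cong F_{m-1}^\varphi$, and pullback equivalence of fiber bundles then yields a bundle isomorphism $F_m\cong F_m^\varphi$ compatible with the projections, completing the induction. The main obstacle is precisely the surjectivity of $H^2(B\prod Z_j;\Z)\to H^2(F_{m-1}^\varphi;\Z)$: guaranteeing that every line bundle on $F_{m-1}^\varphi$ comes from a character of $\prod Z_j$ demands the vanishing of $H^{\le 2}(\prod_j K_j;\Z)$, and this is exactly what the simply-connectedness hypothesis on each $K_j$ provides.
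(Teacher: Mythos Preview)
Your proof is correct and follows essentially the same route as the paper: induct on $m$, use the Serre spectral sequence of the fibration $\prod_j K_j \to F_{m-1}^{\varphi} \to B(\prod_j Z_j)$ together with $H^1(K_j)=H^2(K_j)=0$ (from simply-connectedness) to see that $u^*$ is an isomorphism on $H^2$, lift $\overline{f_m}$ through $B(\prod_j Z_j)$ via $[{-},BT_m]\cong H^2({-};\Z)^{\dim T_m}$, and then identify the resulting map with a homomorphism $\prod_j Z_j\to T_m$ using $\mathrm{Hom}(Z_j,S^1)\cong H^2(BZ_j;\Z)$. The only cosmetic differences are that the paper records explicitly that $BZ_j$ is simply-connected (so the spectral sequence has untwisted coefficients) and that $F_{m-1}^{\varphi}$ has the homotopy type of a CW complex, points you implicitly use.
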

\begin{proof}
We show by induction.
	Assume that a flag bundle $F$ over $F_{m-1}^{\varphi}$ is associated to the classifying map
	\[
	f_m \colon F_{m-1}^{\varphi} \to BK_m
	\]
	with fiber $K_m/Z_m$, and also assume that $f_m$ factors through $BT_m$, i.e.
	there exist $\overline{f_m} \colon F_{m-1}^{\varphi} \to BT_m$ such that the following diagram commutes.
	\[
	\begin{tikzcd}[column sep = 0mm]
	F_{m-1}^{\varphi} \arrow[rr, "f_m"] \arrow[dr, "\overline{f_m}"'] && BK_m \\
	& BT_m \arrow[ru, "B \iota"']
	\end{tikzcd}
	\]	
	
	From the construction of $F_{m-1}^{\varphi}$, we have a principal $\prod_{j=1}^{m-1} Z_j$ bundle.
	Denote its classifying map by $u$ so that we have the pull-back of the universal $\prod_{j=1}^{m-1} Z_{j}$ bundle as follows:
	\[
	\begin{tikzcd}
	\prod_{j=1}^{m-1} Z_j \arrow[r,equal] \arrow[d,hook] & \prod_{j=1}^{m-1} Z_j \arrow[d, hook]\\
	\prod_{j=1}^{m-1} K_j \dar \rar & E  \dar \\
	F_{m-1}^{\varphi} \arrow[r, "u"] & B(\prod_{j=1}^{m-1} Z_j),
	\end{tikzcd}
	\]
	where $E = E(\prod_{j=1}^{m-1} K_j)$ can also serve as the universal space for any subgroup of $\prod_{j=1}^m K_j$.

	Since $E\to B(\prod_{j=1}^{m-1} Z_j)$ is a fibration, the bottom pull-back square is a homotopy pull-back at the same time.
	Thus, the bottom square can be understood as restricting $u$ on a contractible space $E$
	so that the homotopy fibre of $u$ is $\prod_{j=1}^{m-1}K_j$.
	That is, we have the following homotopy fibration
	\begin{equation}\label{eq_fibration_K_i_2}
	\prod_{j=1}^{m-1} K_j~\longrightarrow~ F_{m-1}^{\varphi} \stackrel{u}{\longrightarrow} B\left(\prod_{j=1}^{m-1} Z_j \right).
	\end{equation}
	
Note that each $K_j$ is compact and simply-connected, so that homotopy groups $\pi_1(K_j)$ and $\pi_2(K_j)$ are trivial for $1 \leq j \leq m$ (see~\cite[Proposition 7.5 in Chapter V]{BrockertomDieck85representations}). Hence $H^1(K_j) = H^2(K_j) = 0$ by the Hurewicz isomorphism theorem. 
	Since $Z_j$ is connected, $BZ_j$ is simply-connected for all $1 \leq j \leq m$. 
Recall that $F_{m-1}^{\varphi}$ admits the structure of a CW complex with even dimensional cells only.
	Combining these facts with the Serre spectral sequence with respect to the fibration~\eqref{eq_fibration_K_i_2} we see that
		\[
		u^* \colon H^2\left( B \left( \prod_{j=1}^{m-1} Z_j \right) ; \Z\right) \to H^2(F_{m-1}^{\varphi};\Z)
		\]
	is an isomorphism.
	
	Using the identification $[X, BS^1] \cong H^2(X;\mathbb{Z})$ for a topological space $X$ which has the homotopy type of a CW complex (see~\cite[Proposition 3.10]{hatcher2003vector}),
	we see that $\overline{f_m}$ factors up to homotopy as follows:
	\begin{equation}\label{eq_def_psi}
	\begin{tikzcd}[column sep = 0mm]
	F_{m-1}^{\varphi} \arrow[rr, "\overline{f_m}"] \arrow[rd, "u"'] && BT_m. \\
	& B(\prod_{j=1}^{m-1} Z_j) \arrow[ru,dashed,"\psi"']
	\end{tikzcd}
	\end{equation}
Furthermore, we obtain $\varphi \in \text{Hom}\left(\prod_{j=1}^{m-1}  Z_j,T_m\right)$
	such that $B\varphi=\psi$ through the following bijection:
		\[
	\begin{split}
	\textup{Hom}(Z_j,S^1) &\cong \textup{Hom}(\pi_1(Z_j),\Z) \quad
	\text{(see, for example,~\cite[Proposition 9.4]{BwyerWilkerson_compact_Lie_group98})} \\
	 &\cong \textup{Hom}(\pi_2(BZ_j), \Z) \\
	&\cong \textup{Hom}(H_2(BZ_j); \Z) \quad (\text{by the Hurewicz isomorphism theorem}) \\
	&\cong H^2(BZ_j;\Z) \quad (\text{by the universal coefficient theorem})
	\end{split}
	\]	
	Consider the isomorphism 
	\begin{equation}\label{eq_isom_hom}
	\text{Hom}\left(\prod_{j=1}^{m-1}  Z_j,T_m\right) \stackrel{\cong}{\longrightarrow}
	\left(\prod_{j=1}^{m-1}\text{Hom}(Z_j,T_m) \right) ,
	\end{equation}
	and denote the image of $\varphi$ under the map~\eqref{eq_isom_hom} by 
	\[
	\PP{m}{j} \colon Z_j \to T_m, \quad \text{ for } 1 \leq j \leq m-1.
	\]
	By Definition \ref{def_fBT_quotient}, 
	this sequence $(\PP{m}{j})_{1 \leq j \leq m-1}$ defines a bundle $F_{m}^{\varphi}$ over $F_{m-1}^{\varphi}$,
	which we will show is isomorphic to $F$.
	In fact, $F$ has the classifying map
	\[
	f_m \colon F_{m-1}^{\varphi} \stackrel{\overline{f_m}}{\longrightarrow} BT_m \stackrel{B \iota}{\longrightarrow} BK_m,
	\]
	and $F_{m}^{\varphi}$ has the classifying map
	\[
	\begin{tikzcd}
	F_{m-1}^{\varphi} \arrow[r, "u"] & B\left(\prod_{j=1}^{m-1} Z_j\right) \arrow[r, "B \varphi"] & BT_m \arrow[r, "B \iota"] & BK_m.
	\end{tikzcd}
	\]
	Since $\overline{f_m}$ is homotopic to $B \varphi\circ u$ by \eqref{eq_def_psi},
	they define the same bundle (see~\cite[Theorem 2.1]{cohen1998topology}). 
\end{proof}

\begin{remark}\label{remark_fBT_matrix}
Assume that all $Z_j$'s are maximal tori. If we fix the isomorphisms $Z_j\simeq (S^1)^{n_j+1}$, then
		$\PP{\ell}{j}:Z_{j}\to T_{\ell}$ can be represented by
	$(n_{\ell}+1) \times (n_{j}+1)$ matrices $A^{(\ell)}_{j}\in M_{n_{\ell}+1,n_{j}+1}(\Z)$ with integer entries
	through the obvious identification.
	For example, for $(K_j,Z_j)= (U(n_j+1),T_j)$ we have
\[
\begin{split}
&\PP{\ell}{j}(\mathrm{diag}(e^{\sqrt{-1}t_1},\ldots,e^{\sqrt{-1}t_{n_{j}+1}})) \\
& \qquad = 
\mathrm{diag}(e^{\sqrt{-1} \sum_{h=1}^{n_{j}+1} A^{(\ell)}_{j}(1,h) t_{h}},
\ldots,e^{\sqrt{-1} \sum_{h=1}^{n_{j}+1} A^{(\ell)}_{j}(n_{\ell}+1,{h}) t_{h}}),
\end{split}
\]
	as was introduced in \cite{KLSS18},
	where $A^{(\ell)}_{j}(k,h)$ is the $(k,h)$-entry of $A^{(\ell)}_{j}$ for $1\le k\le n_{\ell}+1$ and $1\le h\le n_{j}+1$.
\end{remark}

\begin{remark}\label{remark_fBT_and_Ajl}
	The simply-connectedness assumption on $K_j$ in Proposition~\ref{prop_equiv_def_of_fBT}
	can be weakened. The assumption is used only to assert $u^*$ is an isomorphism.
	When $u^*$ is surjective, we can choose a map $\psi$ which makes \eqref{eq_def_psi} commutative,
	and hence, homomorphisms $\PP{\ell}{j} \colon Z_{j} \to T_{\ell}$ for $1 \leq j < \ell \leq m$ 
	so that $F^{\varphi}_{\bullet}$ is isomorphic to $F_{\bullet}$ as flag Bott towers.
	In fact, in \cite[Proposition 2.11]{KLSS18}
	the case of full flag Bott manifolds of type~$A$ when $(K_j,Z_j)= (U(n_j+1),T_j)$ is considered
	and a particular choice for $\psi$ is made.
\end{remark}

We now have two descriptions which are equivalent when all $K_j$ are simply-connected of a flag Bott tower of general Lie type.
The latter description encodes not only the iterated bundle structure but
also an action of a torus as we will see in the next section.

\section{Equivariant cohomology rings of  flag Bott manifolds of \\ general Lie type}
\label{sec_equi_cohomology_ring_flag_Bott_manifolds}

For a topological space $X$ with an action of a topological group $G$, its equivariant cohomology $H^\ast_G (X)$ is defined to be
the singular cohomology  $H^\ast(X_G)$ of the Borel construction $X_G=EG\times_G X$ of $X$. Here $EG$ is a contractible space on
which $G$ acts freely, and $EG\times_G X=EG\times X/\sim$ where $(h, x)\sim(gh, gx)$ for any $(h,x)\in EG\times X$ and $g\in G$.
In this section, 
we define an action of a torus $\mathbb{T}$ on the flag Bott manifold $F_m^{\varphi}$ of general Lie type 
and compute the equivariant cohomology $H^\ast_{\mathbb{T}}(F_m^{\varphi})$.

For $1 \leq j \leq m$, let $K_j$ be a compact connected Lie group, $T_j \subset K_j$ be a maximal torus, 
and $Z_j \subset K_j$ be the centralizer of a circle subgroup of $T_j$. Consider an $m$-stage flag Bott tower  $\{F_{j}^{\varphi} \mid 0 \leq j \leq m\}$
determined by a family of homomorphisms $\{\PP{\ell}{j} \colon Z_{j} \to T_{\ell} \mid 1 \leq j < \ell \leq m\}$. 
We define a torus action on $F_{m}^{\varphi}$ as follows:
\begin{definition}\label{def:canonical_action_of_T}
	Let $\mathbb{T} = \prod_{j=1}^m T_j$.
	We have a well-defined action of $\mathbb{T}$ on $F_m^{\varphi}$ given by
	\[	
	(t_1,\dots,t_m) \cdot [g_1,\dots,g_m] = [t_1g_1,\dots,t_mg_m],
	\]
	where $(t_1,\dots,t_m) \in \mathbb{T}$ and $[g_1,\dots,g_m] \in F_m^{\varphi}$. 
	The well-definedness can be seen from the fact that the images of $\varphi_{j}^{(\ell)}$ lie in 
	the commutative groups $T_{\ell}$.
\end{definition}

Let us compute the equivariant cohomology ring of $F_m^{\varphi}$ with respect to the action of $\mathbb{T}$.
Let $\prod_{j=1}^{m-1}T_j$ act on $\prod_{j=1}^{m-1} K_j$ by the left multiplication.
The action of $\prod_{j=1}^{m-1}T_j$ on $\prod_{j=1}^{m-1} K_j$ commutes with that of $\prod_{j=1}^{m-1} Z_j$
given by $\{\PP{\ell}{j}\}$ as in~\eqref{eq:quotient},
so we have the principal $\prod_{j=1}^{m-1} Z_j$-bundle
\[
\prod_{j=1}^{m-1} Z_j \to E\times_{\prod_{j=1}^{m-1}T_j} \prod_{j=1}^{m-1} K_j \to E\times_{\prod_{j=1}^{m-1}T_j} F_{m-1}^{\varphi}.
\]
We denote its classifying map by
\[
\overline{u} \colon E\times_{\prod_{j=1}^{m-1}T_j} F_{m-1}^{\varphi} \to B\left(\prod_{j=1}^{m-1}Z_j\right).
\]

Since $F_m^{\varphi}$ can be identified with the associated flag bundle of $f_m \colon F_{m-1}^{\varphi} \to BK_m$ with fiber $K_m/Z_m$, its Borel construction with respect to the $\mathbb{T}$ action fits into the following pull-back diagram
\begin{equation}\label{eq_Borel_construction_fBT}
\begin{tikzcd}
K_m/Z_m \arrow[r, equal] \arrow[d,hook] & K_m/Z_m \arrow[d, hook]\\
E\mathbb{T}  \times_{\mathbb{T} } F_m^{\varphi} \rar \dar & BZ_m \dar \\
E\mathbb{T}  \times_{\mathbb{T} }F_{m-1}^{\varphi} \arrow[r, "\widetilde{f}_m"] & BK_m.
\end{tikzcd}
\end{equation}
where the factor $T_m$ in $\mathbb{T}$ acts trivially on $F_{m-1}^{\varphi}$.
Here, $\widetilde{f}_m \colon E\mathbb{T}  \times_{\mathbb{T} } F_{m-1}^{\varphi} \to BK_m$ is the following composition:
\begin{equation}\label{eq_def_of_tilde_f}
\begin{split}
&E\mathbb{T}  \times_{\mathbb{T} } F_{m-1}^{\varphi} \cong BT_m \times E\left(\prod_{j=1}^{m-1}T_j\right) \times_{\prod_{j=1}^{m-1} T_j} F_{m-1}^{\varphi} \\ 
& \qquad \stackrel{1 \times \overline{u}}{\xrightarrow{\hspace*{0.8cm}}}  B\left(T_m \times \prod_{j=1}^{m-1} Z_j\right) 
\stackrel{{B(\mathrm{mul})\circ B(1 \times \varphi)}}{\xrightarrow{\hspace*{2.6cm}}}
BT_m \stackrel{B \iota}{\longrightarrow} BK_m,
\end{split}
\end{equation}
where $\mathrm{mul}$ is the multiplication of $T_m$ and $\varphi := \prod_{j=1}^{m-1} \PP{m}{j}$. 

By applying Proposition~\ref{prop_cohomology_ring_of_associated_flag_bundle} to~\eqref{eq_Borel_construction_fBT} we have the following result:
\begin{theorem}\label{thm_equiv_cohomology_ring_of_FBM}
	 Let $K_j$ be a compact connected Lie group, $T_j \subset K_j$ a maximal torus with dimension $n_{j}$, and $Z_j \subset K_j$ the centralizer of a circle subgroup of $T_j$ for $1\leq j \leq m$.
	Let $\{F_{j}^{\varphi} \mid 0 \leq j \leq m\}$ be an $m$-stage flag Bott tower determined by a family of homomorphisms $\{\PP{\ell}{j} \colon Z_{j} \to T_{\ell} \mid 1 \leq j < \ell \leq m\}$. Let $R$ be a PID in which torsion primes of all $K_j$ are invertible. Then the equivariant cohomology ring of $F_m^{\varphi}$ with respect to the  
action of $\mathbb{T}$ defined by Definition~\ref{def:canonical_action_of_T} is 
	\[
	H^{\ast}_{\mathbb{T}}(F_m^{\varphi};R) \simeq H^{\ast}_{\mathbb{T}}(F_{m-1}^{\varphi};R) \otimes_{H^{\ast}(BK_m;R)} H^{\ast}(BZ_m;R)
	\]
	where the $H^{\ast}(BK_m;R)$-module structure on $H^{\ast}_{\mathbb{T}}(F_{m-1}^{\varphi};R)$ is induced  by $\widetilde{f}_m$. 
\end{theorem}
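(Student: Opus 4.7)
The plan is to observe that the right-hand side of the claimed isomorphism is exactly what Proposition~\ref{prop_cohomology_ring_of_associated_flag_bundle} produces when applied to the bottom square of diagram~\eqref{eq_Borel_construction_fBT}, so the real work is to justify that this square is a pull-back square of the expected form. Once that is established, the equivariant cohomology is by definition the ordinary cohomology of the Borel construction, and the theorem reduces to a direct application of the earlier proposition.

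First I would verify that $E\mathbb{T}\times_{\mathbb{T}} F_m^{\varphi}$ really is the total space of the flag bundle with fiber $K_m/Z_m$ over $E\mathbb{T}\times_{\mathbb{T}} F_{m-1}^{\varphi}$ classified by the map $\widetilde{f}_m$ of~\eqref{eq_def_of_tilde_f}. The $\mathbb{T}$-action on $F_m^{\varphi}$ from Definition~\ref{def:canonical_action_of_T} has the property that the projection $p_m\colon F_m^{\varphi}\to F_{m-1}^{\varphi}$ is equivariant, where $T_m$ acts trivially on $F_{m-1}^{\varphi}$ and $\prod_{j=1}^{m-1}T_j$ acts as in the inductive step. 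Hence $p_m$ induces a fibration on Borel constructions with the same fiber $K_m/Z_m$. The concrete description of the classifying map as the composition in~\eqref{eq_def_of_tilde_f} follows by tracing the construction of $F_m^{\varphi}$ as an orbit space: the $\prod_{j=1}^{m-1}Z_j$-principal bundle determining $F_{m-1}^{\varphi}$ acquires, after Borel-izing by $\prod_{j=1}^{m-1}T_j$, the classifying map $\overline{u}$; pushing forward along $\varphi=\prod_{j=1}^{m-1}\PP{m}{j}$ and then multiplying with the factor $T_m$ (which acts on $K_m$ from the left in the definition of $F_m^{\varphi}$) yields precisely the composition displayed.

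With the pull-back identification in hand, apply Proposition~\ref{prop_cohomology_ring_of_associated_flag_bundle} to the bottom square of~\eqref{eq_Borel_construction_fBT} with $X=E\mathbb{T}\times_{\mathbb{T}} F_{m-1}^{\varphi}$ and $f=\widetilde{f}_m$. Since $R$ is assumed to invert all torsion primes of every $K_j$, in particular those of $K_m$, the hypothesis of the proposition is met. The conclusion reads
\[
H^{\ast}(E\mathbb{T}\times_{\mathbb{T}} F_m^{\varphi};R) \cong H^{\ast}(E\mathbb{T}\times_{\mathbb{T}} F_{m-1}^{\varphi};R) \otimes_{H^{\ast}(BK_m;R)} H^{\ast}(BZ_m;R),
\]
and rewriting both Borel cohomology rings as $\mathbb{T}$-equivariant cohomology yields exactly the stated formula, with the $H^{\ast}(BK_m;R)$-module structure on $H^{\ast}_{\mathbb{T}}(F_{m-1}^{\varphi};R)$ induced by $\widetilde{f}_m$.

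The only real obstacle is the bookkeeping in the middle paragraph: one must check carefully that the Borel construction of the flag-bundle projection $F_m^{\varphi}\to F_{m-1}^{\varphi}$ still fibers with fiber $K_m/Z_m$ and that its classifying map agrees with the composite~\eqref{eq_def_of_tilde_f}. This is essentially a verification that Borel construction commutes with forming associated bundles in this setting; given the clean quotient description of $F_m^{\varphi}$ in Definition~\ref{def_fBT_quotient} and the commutativity of the $T_m$-action with the $Z_m$-action on the last factor, the verification is routine once one unwinds the definitions. After that, the proof is essentially a single invocation of Proposition~\ref{prop_cohomology_ring_of_associated_flag_bundle}.
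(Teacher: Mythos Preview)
Your proposal is correct and matches the paper's approach exactly: the paper establishes the pull-back diagram~\eqref{eq_Borel_construction_fBT} and the classifying map~\eqref{eq_def_of_tilde_f} in the paragraphs preceding the theorem, and then the proof is literally the single sentence ``By applying Proposition~\ref{prop_cohomology_ring_of_associated_flag_bundle} to~\eqref{eq_Borel_construction_fBT} we have the following result.'' Your additional remarks about verifying that the Borel construction commutes with forming the associated flag bundle are precisely the content the paper packages into the setup before the theorem statement.
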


We use the following well-known identification to get an explicit formula for $H^{\ast}_{\mathbb{T}}(F_m^{\varphi};R)$:
\begin{equation}\label{eq_notation_u_y}
\begin{split}	
& H^{\ast}(BT_m;R) \simeq R[u_{m,k} \mid 1\leq k \leq n_m] \stackrel{\text{let}}{=}R[\mathbf{u}_m], \\
& H^{\ast}(BK_m;R) \simeq R[y_{m,k} \mid 1\leq k \leq n_m]^{W(K_m)} \stackrel{\text{let}}{=} R[\mathbf{y}_m]^{W(K_m)}, \\
& H^{\ast}(BZ_m;R) \simeq R[y_{m,k} \mid 1\leq k \leq n_m]^{W(Z_m)} \stackrel{\text{let}}{=} R[\mathbf{y}_m]^{W(Z_m)},
\end{split}
\end{equation}
where $u_{m,k}$ and $y_{m,k}$ are all degree $2$ elements for all $m,k$.
Here, we use the fact 
that the Weyl group $W(Z_{m})$ of $Z_{m}$ may be regarded as the subgroup of the Weyl group $W(K_{m})$ of $K_{m}$; therefore, the invariant polynomial ring    
$H^{\ast}(BK_{m};R) \simeq H^{\ast}(BT_{m};R)^{W(K_{m})}$ is a subring of the invariant polynomial ring $H^{\ast}(BZ_{m};R) \simeq H^{\ast}(BT_{m};R)^{W(Z_{m})}$ (see~\cite{Borel53Sur}). 
We also note that we use the symbol $\mathbf{u}_m$ as the generators defined from the acting torus $T_{m}$ and the symbol  
$\mathbf{y}_m$ as the generators defined from the maximal torus $T_{m}\subset Z_{m}\subset K_{m}$.

Since $T_m$ acts trivially on $F_{m-1}^{\varphi}$, we have that
\[
\begin{split}
H^{\ast}_{\mathbb T}(F_{m}^{\varphi};R) &\simeq H^{\ast}_{\mathbb T}(F_{m-1}^{\varphi};R) \otimes_{H^{\ast}(BK_m;R)} H^{\ast}(BZ_m;R) \\
&\simeq H^{\ast}_{\mathbb T}(F_{m-1}^{\varphi};R) \otimes_{R[\mathbf{y}_m]^{W(K_m)}} R[\mathbf{y}_m]^{W(Z_m)} \\
&\simeq \left( H^{\ast}_{{\mathbb T}'}(F_{m-1}^{\varphi};R) \otimes H^{\ast}(BT_m) \right) \otimes_{R[\mathbf{y}_m]^{W(K_m)}} R[\mathbf{y}_m]^{W(Z_m)} \\
&\simeq \left( H^{\ast}_{{\mathbb T}'}(F_{m-1}^{\varphi};R) \otimes R[\mathbf{u}_m] \right) \otimes_{R[\mathbf{y}_m]^{W(K_m)}} R[\mathbf{y}_m]^{W(Z_m)} 
\end{split}
\]
where ${\mathbb T}' = \prod_{j=1}^{m-1} T_j$.
It is easy to check that the image of $y_{m,k}\in H^{\ast}(BT_{m})$ $(k=1,\ldots, n_{m})$ by the induced homomorphism of the composition maps~\eqref{eq_def_of_tilde_f} can be written by 
\[
u_{m,k} + \sum_{j =1}^{m-1}(\PP{m}{j})^{\ast}(y_{m,k}),
\]
where $(\PP{m}{j})^{\ast}:H^{\ast}(BT_{m})\to H^{\ast}_{{\mathbb T}'}(F_{m-1}^{\varphi})$ is the induced homomorphism of $B\PP{m}{j}\circ \overline{u}$.
Using the inductive application of the above argument, we have the following explicit formula:
\begin{corollary}\label{cor_equiva_cohomo_ring_of_fBT_general_type}
	Let $\mathbf{u}_j$ and $\mathbf{y}_j$ stand for $(u_{j,1},\dots,u_{j,n_j})$ and $(y_{j,1},\dots,y_{j,n_j})$, respectively for $j=1,\ldots,m$.  Then, we have
	\[
	H^{\ast}_{\mathbb{T}}(F_{m}^{\varphi};R) \simeq R[\mathbf{u}_1,\dots,\mathbf{u}_m] \otimes_R (R[\mathbf{y}_1,\dots,\mathbf{y}_m])^{W(Z)}\big/ \langle I_1,\dots, I_m \rangle
	\]
	where $W(Z) = \prod_{j=1}^m W(Z_j)$, and $I_{\ell}$, for $1\leq \ell \leq m$, is the ideal generated by the polynomials
	\[
	h(\mathbf{y}_{\ell}) - h\left(\mathbf{u}_{\ell} + \sum_{j =1}^{\ell-1}\Phi_{j}^{(\ell)}(\mathbf{y}_{\ell})\right) 
	\]
	for $h \in R[\mathbf{y}_{\ell}]^{W(K_{\ell})}$ and
\[
\Phi_{j}^{(\ell)}(\mathbf{y}_{\ell}):=\left((\PP{\ell}{j})^{\ast}(y_{\ell,1}),\dots,(\PP{\ell}{j})^{\ast}(y_{\ell,n_\ell})\right).
\]
\end{corollary}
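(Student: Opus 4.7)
The plan is to prove the corollary by induction on $m$, using Theorem \ref{thm_equiv_cohomology_ring_of_FBM} as the engine and the discussion immediately preceding the corollary as a blueprint. The base case $m=0$ is trivial: $F_0^{\varphi}$ is a point and both sides of the stated isomorphism collapse to $R$. For the inductive step I would assume the explicit formula holds for the $(m-1)$-stage flag Bott tower with acting torus $\mathbb{T}' = \prod_{j=1}^{m-1} T_j$ and derive the $m$-stage version.

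The first task is to peel off the top stage. Applying Theorem \ref{thm_equiv_cohomology_ring_of_FBM} gives
\[
H^{\ast}_{\mathbb T}(F_{m}^{\varphi};R) \simeq H^{\ast}_{\mathbb T}(F_{m-1}^{\varphi};R) \otimes_{H^{\ast}(BK_m;R)} H^{\ast}(BZ_m;R).
\]
Because the $T_m$-factor of $\mathbb{T}$ acts trivially on $F_{m-1}^{\varphi}$, the associated Borel construction is just $BT_m \times (E\mathbb{T}'\times_{\mathbb{T}'} F_{m-1}^{\varphi})$, and the Künneth theorem (valid since $H^*(BT_m;R) = R[\mathbf{u}_m]$ is $R$-free) yields $H^{\ast}_{\mathbb T}(F_{m-1}^{\varphi};R) \cong H^{\ast}_{\mathbb{T}'}(F_{m-1}^{\varphi};R) \otimes_R R[\mathbf{u}_m]$. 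After substituting the inductive hypothesis for $H^{\ast}_{\mathbb{T}'}(F_{m-1}^{\varphi};R)$ and replacing $H^{\ast}(BK_m;R)$ and $H^{\ast}(BZ_m;R)$ by their presentations from \eqref{eq_notation_u_y}, the whole expression is already in the desired shape except for the new relation ideal $I_m$.

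The heart of the argument, and the expected main obstacle, is to identify the $H^{\ast}(BK_m;R)$-module structure on the left factor induced by $\widetilde{f}_m$, which is what produces $I_m$. The plan is to trace the generator $y_{m,k}\in R[\mathbf{y}_m] \cong H^{\ast}(BT_m;R)$ through the chain of maps \eqref{eq_def_of_tilde_f}. Under $B(1\times\varphi)$ the class $y_{m,k}$ pulls back to a tuple whose components are $y_{m,k}\in H^{\ast}(BT_m;R)$ and $(\PP{m}{j})^{\ast}(y_{m,k}) \in H^{\ast}(BZ_j;R)$ for $1\le j\le m-1$; the multiplication map $B(\mathrm{mul})$ sums these components; and the identification with $H^{\ast}_{\mathbb{T}}(F_{m-1}^{\varphi};R)$ via $1\times\overline{u}$ sends the first component to $u_{m,k}$ and the $j$-th component to its image in equivariant cohomology. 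The upshot is $\widetilde{f}_m^{\ast}(y_{m,k}) = u_{m,k} + \sum_{j=1}^{m-1}(\PP{m}{j})^{\ast}(y_{m,k})$, so for any $W(K_m)$-invariant $h$ the element $h(\mathbf{y}_m) \in H^{\ast}(BK_m;R)$ acts on the left tensor factor as $h\bigl(\mathbf{u}_m + \sum_{j=1}^{m-1} \Phi_j^{(m)}(\mathbf{y}_m)\bigr)$. Forcing these two expressions to agree in the tensor product over $H^{\ast}(BK_m;R)$ produces precisely $I_m$, completing the inductive step. The principal care required is in keeping the Künneth decomposition, the classifying maps, and the induction hypothesis's generator names synchronized across $\mathbb{T}$ and $\mathbb{T}'$; once that bookkeeping is set up, the computation of $\widetilde{f}_m^*$ is a direct diagram chase.
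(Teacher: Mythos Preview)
Your proposal is correct and follows essentially the same route as the paper: the paper also proceeds by induction on $m$, applies Theorem~\ref{thm_equiv_cohomology_ring_of_FBM}, splits off the $R[\mathbf{u}_m]$ factor using that $T_m$ acts trivially on $F_{m-1}^{\varphi}$, and then computes $\widetilde{f}_m^{\ast}(y_{m,k}) = u_{m,k} + \sum_{j=1}^{m-1}(\PP{m}{j})^{\ast}(y_{m,k})$ by chasing \eqref{eq_def_of_tilde_f}. Your write-up is slightly more detailed (e.g., naming K\"unneth explicitly and flagging the bookkeeping between $\mathbb{T}$ and $\mathbb{T}'$), but there is no substantive difference in strategy.
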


\begin{corollary}\label{cor_equiv_cohomology_type_A}
Suppose that $F_{\bullet} = \{F_j \mid 0 \leq j \leq m \}$ is an $m$-stage flag Bott manifold of type A defined by
a set of integer matrices 
$\{\A{\ell}{j} \in M_{(n_{\ell}+1) \times (n_{j}+1)}(\Z) \mid 1 \leq j < \ell \leq m\}$ as in~Remark~\ref{remark_fBT_matrix}.
Then, we have that
	\[
	H^*_{\mathbb{T}}(F_m ; \Z)
	\cong \Z[\mathbf{u}_{j}, \mathbf{y}_j \mid 1 \leq j \leq m]/\langle I_1',\dots,I_m' \rangle,
	\]
	where $I_{\ell}'$ is the ideal generated by the polynomials
	\[
	c_k(\mathbf{y}_{\ell}) - c_k\left(\mathbf{u}_{\ell} + \sum_{j = 1}^{\ell-1} \Phi_{j}^{(\ell)} (\mathbf{y}_{\ell}) \right).
	\]
	Here $c_k$ are the $k$th elementary symmetric polynomials for $1 \leq k \leq n_{\ell}+1$.
	Since the homomorphism $\PP{\ell}{j}:T_{j}\to T_{\ell}$ is determined by the matrix $\A{\ell}{j}:H_{2}(BT_{j})=\mathbb{Z}^{n_{j}+1}\to H_{2}(BT_{\ell})= \mathbb{Z}^{n_{\ell}+1}$, by letting
$\A{\ell}{j}=(\A{\ell}{j}(k,h))\in M_{(n_{\ell}+1)\times (n_{j}+1)}(\mathbb{Z})$ for $1\le k\le n_{\ell}+1$ and $1\le h\le n_{j}+1$,
the $k$th entry of $\sum_{j = 1}^{\ell-1} \Phi_{j}^{(\ell)} (\mathbf y_{\ell})$, $1\le k\le n_{\ell}+1$, is 
\[
\sum_{j = 1 }^{\ell-1} \left(\sum_{h=1}^{n_{j}+1}\A{\ell}{j}(k,h)\cdot y_{j,h}.\right)
\]
\end{corollary}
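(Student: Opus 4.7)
The plan is to specialize Corollary \ref{cor_equiva_cohomo_ring_of_fBT_general_type} to the type $A$ setting where $K_j=U(n_j+1)$ and $Z_j=T_j$ is a maximal torus. First I would observe that $U(n_j+1)$ has no torsion primes (see Table \ref{table_torsion_prime}), so we may take the coefficient ring to be $R=\mathbb{Z}$. Next, since $Z_j=T_j$ is itself a torus, the Weyl group $W(Z_j)$ is trivial, so the factor $(R[\mathbf{y}_1,\dots,\mathbf{y}_m])^{W(Z)}$ in Corollary \ref{cor_equiva_cohomo_ring_of_fBT_general_type} simply becomes the full polynomial ring $\mathbb{Z}[\mathbf{y}_1,\dots,\mathbf{y}_m]$, matching the generators appearing in the claimed presentation.

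Next I would identify the generators of the ideal. The Weyl group $W(K_\ell)$ of $K_\ell=U(n_\ell+1)$ is the symmetric group $\mathfrak{S}_{n_\ell+1}$, and the invariant subring $\mathbb{Z}[\mathbf{y}_\ell]^{\mathfrak{S}_{n_\ell+1}}$ is, by the fundamental theorem of symmetric polynomials, freely generated by the elementary symmetric polynomials $c_1,\dots,c_{n_\ell+1}$ in the variables $y_{\ell,1},\dots,y_{\ell,n_\ell+1}$. Hence it suffices to take $h=c_k$ for $1\le k\le n_\ell+1$ in the definition of $I_\ell$, and the ideal $I_\ell'$ in the statement is exactly the one produced by Corollary \ref{cor_equiva_cohomo_ring_of_fBT_general_type}.

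It then remains to unravel the polynomials $\Phi_j^{(\ell)}(\mathbf{y}_\ell)$ in terms of the integer matrices $A_j^{(\ell)}$ of Remark \ref{remark_fBT_matrix}. Using the identification $H^2(BT_j;\mathbb{Z})\cong\mathrm{Hom}(H_2(BT_j;\mathbb{Z}),\mathbb{Z})$ and the standard isomorphism $H_2(BT_j;\mathbb{Z})\cong\mathrm{Hom}(S^1,T_j)$, the homomorphism $\varphi_j^{(\ell)}\colon T_j\to T_\ell$ is classified by $A_j^{(\ell)}$, and the induced map on $H^2(-;\mathbb{Z})$ is the transpose, which sends the generator $y_{\ell,k}$ to $\sum_{h=1}^{n_j+1}A_j^{(\ell)}(k,h)\,y_{j,h}$. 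Substituting this into the definition of $\Phi_j^{(\ell)}(\mathbf{y}_\ell)$ from Corollary \ref{cor_equiva_cohomo_ring_of_fBT_general_type} gives precisely the $k$th entry displayed in the statement.

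I expect the only subtle point to be the bookkeeping of the matrix versus its transpose in the last step, namely being careful that $A_j^{(\ell)}$ records the map on $H_2$ so that its transpose records the pullback on $H^2$; this yields the particular indexing $A_j^{(\ell)}(k,h)$ of the coefficient of $y_{j,h}$ in the $k$th slot, matching the formula in the corollary. Everything else follows by direct substitution into Corollary \ref{cor_equiva_cohomo_ring_of_fBT_general_type}.
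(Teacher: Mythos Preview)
Your proposal is correct and matches the paper's implicit argument: the corollary is stated without a separate proof precisely because it is the direct specialization of Corollary~\ref{cor_equiva_cohomo_ring_of_fBT_general_type} to $(K_j,Z_j)=(U(n_j+1),T_j)$, using that $U(n_j+1)$ has no torsion primes, that $W(Z_j)$ is trivial, and that $\mathbb{Z}[\mathbf{y}_\ell]^{\mathfrak{S}_{n_\ell+1}}$ is generated by the elementary symmetric polynomials. Your unpacking of $\Phi_j^{(\ell)}$ via the matrices $A_j^{(\ell)}$ and the transpose bookkeeping is exactly the content of Remark~\ref{remark_fBT_matrix} and the discussion preceding Corollary~\ref{cor_equiva_cohomo_ring_of_fBT_general_type}.
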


The next two remarks show that \eqref{def:canonical_action_of_T} 
may define different torus actions on isomorphic $F^{\varphi}_{\bullet}$'s depending on the defining data 
$(K_\bullet,Z_\bullet)$ and $\{\PP{\ell}{j}\}$.

\begin{remark}\label{rmk_equivariant_cohomology_typeA}
Let $(K_1,Z_1)=(SU(2),T_{SU(2)})$. The corresponding 
flag Bott manifold of general Lie type is $SU(2)/T_{SU(2)}\simeq \C P^1$.
Then $S^1\simeq T_{SU(2)}$ acts on $SU(2)/T_{SU(2)}$ by the left multiplication (see~Definition~\ref{def:canonical_action_of_T}).
This action is not effective but $\mathrm{diag}(-1,-1) \in T_{SU(2)}$ acts trivially.
By Corollary~\ref{cor_equiva_cohomo_ring_of_fBT_general_type}, the equivariant cohomology of $SU(2)/T_{SU(2)}$ with this action is
\[
H^*_{T_{SU(2)}}(SU(2)/T_{SU(2)};\Z) \simeq \Z[u_{1,1},y_{1,1}]/I, 
\]
where $I$ is the ideal generated by 
\[
u_{1,1}^{2} -y_{1,1}^{2}
\]
because $W(SU(2))\simeq \mathbb{Z}_{2}$ acts on $H^{*}(BS^{1})\simeq \mathbb{Z}[x]$ by $x\mapsto -x$.
Hence,  
we have that
\begin{equation}\label{eq_equivariant_cohomology_SU}
H^*_{T_{SU(2)}}(SU(2)/T_{SU(2)};\Z) \simeq \Z[u_{1,1}, y_{1,1}]/\langle u_{1,1}^2 - y_{1,1}^2 \rangle.
\end{equation}

On the other hand, 
for $(K_1,Z_1)=(SO(3),T_{SO(3)})$ the corresponding 
flag Bott manifold of general Lie type is again $SO(3)/T_{SO(3)} \simeq \C P^1$.
This time, $S^1\simeq T_{SO(3)}$ acts effectively
on $SO(3)/T_{SO(3)}$ by the left multiplication.
Theorem~\ref{thm_equiv_cohomology_ring_of_FBM} is not applicable for $R=\Z$ coefficients
since two is the torsion prime of $SO(3)$. However, the standard argument shows
\begin{equation}\label{eq_equivariant_cohomology_CP1}
H^*_{T_{SO(3)}}(SO(3)/T_{SO(3)}; \Z) \simeq \Z[u,v]/\langle uv \rangle.
\end{equation}
One can see that rings~\eqref{eq_equivariant_cohomology_SU} and~\eqref{eq_equivariant_cohomology_CP1} are not isomorphic, and this is because they represent the equivariant cohomology rings with different $S^1$-actions.
\end{remark}

\begin{remark}
Consider $(K_{\bullet}, Z_{\bullet})=\{(SU(2), T_{1}), (SU(2),T_{2})\}$.
Since $SU(2)/T\simeq \C P^1$ and $\pi_{2}(BSU(2))$ is trivial, $F_{2}^{\varphi} \simeq \C P^1\times \C P^1$
for any $\PP{1}{2}: T_1 \to T_2$.
On the other hand, 
we can see the torus actions defined by \eqref{def:canonical_action_of_T} are distinct for different $\PP{1}{2}$.
In this sense, Definition~\ref{def_fBT_quotient} encodes more structures (torus equivariant structures)
	than Definition~\ref{def_fBT_pullback}.
\end{remark}

\begin{remark}
	For an $m$-stage flag Bott manifold $F_m$ associated to $\{(U(n_{j}+1), U(1)^{n_{j}+1})\ |\ j=1,\ldots, m\}$ (see Example~\ref{example_fBT}), the torus $\mathbb{T}$ does not act effectively on $F_m$. If we write $t_j = \diag(t_{j,1},\dots,t_{j,n_j+1}) \in U(1)^{n_{j}+1}=T_j \subset U(n_j+1)$, the subtorus
	\[
	\mathbf{T} := \{(t_1,\dots,t_m) \in \mathbb{T} \mid t_{1,n_1+1} = \cdots = t_{m,n_m+1} = 1 \} \cong (S^1)^{n_1+ \cdots + n_m}
	\]
	acts effectively on $F_m$ (see~\cite[\S 3.1]{KLSS18}). Then the equivariant cohomology ring $H^{\ast}_{\mathbf{T}}(F_m; \Z)$ with respect to the effective torus action of $\mathbf{T}$ is given by 
	\[
	H^{\ast}_{\mathbf{T}}(F_m; \Z) \cong \Z[\mathbf{u}_{j}, \mathbf{y}_j \mid 1 \leq j \leq m ]/\langle u_{1,n_1+1},\dots,u_{m,n_m+1}, I_1',\dots,I_m' \rangle,
	\]
	where $I_j'$ are defined in Corollary~\ref{cor_equiv_cohomology_type_A}.
	Moreover, by ignoring the generators $\mathbf{u}_j$ we get the singular cohomology ring of $F_m$:
	\[
	H^{\ast}(F_m;\Z) \cong \Z[\mathbf{y}_j \mid 1 \leq j \leq m] / \langle J_1,\dots,J_m \rangle,
	\]
	where $J_{\ell}$ is the ideal generated by the polynomials
	\[
	c_k(\mathbf{y}_{\ell}) - c_k \left( \sum_{j=1}^{\ell-1} \Phi_{j}^{(\ell)} (\mathbf{y}_{\ell}) \right).
	\]
\end{remark}

\begin{example}
\label{typeC}
	Consider $F_3 := Sp(3) \times_{T^3} Sp(3) \times_{T^3} Sp(2)/T^2$ with $\PP{\ell}{j} \colon T^{n_{j}} \to T^{n_{\ell}}$
	$((n_1,n_2,n_3)=(3,3,2))$ for $1 \leq j < \ell \leq 3$ determined by the matrices~$\A{\ell}{j}$:
\begin{align*}
	&\A{2}{1} = \begin{bmatrix}
	\A{2}{1}(1,1) & \A{2}{1}(1,2) & \A{2}{1}(1,3) \\ 
	\A{2}{1}(2,1) & \A{2}{1}(2,2) & \A{2}{1}(2,3) \\ 
	\A{2}{1}(3,1) & \A{2}{1}(3,2) & \A{2}{1}(3,3) 
	\end{bmatrix} \in M_{3 \times 3}(\Z), \\
	&\A{3}{1} = \begin{bmatrix}
	\A{3}{1}(1,1) & \A{3}{1}(1,2) & \A{3}{1}(1,3) \\ 
	\A{3}{1}(2,1) & \A{3}{1}(2,2) & \A{3}{1}(2,3) \\ 
	\end{bmatrix} \in M_{2 \times 3}(\Z), \\ 
	&\A{3}{2} = \begin{bmatrix}
	\A{3}{2}(1,1) & \A{3}{2}(1,2) & \A{3}{2}(1,3) \\ 
	\A{3}{2}(2,1) & \A{3}{2}(2,2) & \A{3}{2}(2,3) \\ 
	\end{bmatrix} \in M_{2 \times 3}(\Z). 
	\end{align*}
	Here, we think of $Sp(n)$ as matrices with quaternion entries, and its maximal torus is chosen to be the diagonal matrices  whose entries are complex numbers with unit lengths.
	Then by Corollary~\ref{cor_equiva_cohomo_ring_of_fBT_general_type} (or by \cite[\S 6.2]{FP98Schubert}), we have that
	\[
	H^*_{\mathbb{T}}(F_3 ; \Z) \cong \mathbb{Z}[\mathbf u_1, \mathbf u_2, \mathbf u_3, \mathbf y_1, \mathbf y_2, \mathbf y_3]/ \langle I_1, I_2, I_3 \rangle,
	\]
	where $\mathbf y_j = (y_{j,1},\dots,y_{j,n_j})$, $\mathbf u_j = (u_{j,1},\dots,u_{j,n_j})$,
	$y_{j,k}$ and $u_{j,k}$ are degree~$2$ elements, and
	\begin{align*}
	I_1 &= (1 + y_{1,1}^2)(1+y_{1,2}^2)(1+y_{1,3}^2) - (1+u_{1,1}^2)(1+u_{1,2}^2)(1+u_{1,3}^2), \\
	I_2 &= (1+y_{2,1}^2)(1+y_{2,2}^2)(1+y_{2,3}^2)-\prod_{k=1}^{3}\left\{1+\left(u_{2,k} + \left(\sum_{h=1}^{3} \A{2}{1}(k,h)\cdot y_{1,h} \right) \right)^2\right\}  \\
	I_3 &= (1+y_{3,1}^2)(1+y_{3,2}^2)-\prod_{k=1}^{2}\left\{1+\left(u_{3,k} + \sum_{j=1}^2 \left( \sum_{h=1}^{3} \A{3}{j}(k,h)\cdot y_{j,h} \right)  \right)^2\right\}.
	\end{align*}
\end{example}

\begin{example}
	Consider $F_3 := SU(4) \times_{T^3} Sp(3) \times_{T^3} G_{2}/T^2$ with $\PP{\ell}{j} \colon T^{n_{j}} \to T^{n_{\ell}}$ 
	$((n_1,n_2,n_3)=(3,3,2))$ for $1 \leq j < \ell \leq 3$.
Since two is the torsion prime of $G_{2}$, 
we can apply our theorem with the coefficients in $R=\mathbb{F}_{3}$, i.e., the finite field of order $3$.
Let 
the homomorphisms $\{\PP{\ell}{j} \mid 1 \leq j < \ell \leq 3\}$ be determined by matrices 
$\A{\ell}{j} \in M_{n_{\ell} \times n_{j}}(\Z)$,
where we identified $T^2\subset G_2$ with the set of the matrix $\diag(t_{1},t_{2},t_{1}^{-1}t_{2}^{-1})\in SU(3)\subset G_{2}\subset SO(7)$ for $SU(3)=G_{2}\cap SO(6)$ following \cite{Ishig10}.
Then, by Corollary~\ref{cor_equiva_cohomo_ring_of_fBT_general_type} together with \cite[p.300]{Ishig10},
there are $\mathbf y_j = (y_{j,1},\dots,y_{j,n_j})$, $\mathbf u_j = (u_{j,1},\dots,u_{j,n_j})$ for $j=1,2,3$ such that 
	\[
	H^*_{\mathbb{T}}(F_3 ; \mathbb{F}_{3}) \cong \mathbb{F}_{3}[\mathbf u_1, \mathbf u_2, \mathbf u_3, \mathbf y_1, \mathbf y_2, \mathbf y_3]/ \langle L_1, L_2, L_3 \rangle.
	\]
where 
	\begin{align*}
	L_1 &= (1 + y_{1,1})(1+y_{1,2})(1+y_{1,3}) - (1+u_{1,1})(1+u_{1,2})(1+u_{1,3}), \\
	L_2 &= (1+y_{2,1}^2)(1+y_{2,2}^2)(1+y_{2,3}^2)-\prod_{k=1}^{3}\left\{1+\left(u_{2,k} + \left(\sum_{h=1}^{3} \A{2}{1}(k,h)\cdot y_{1,h} \right)\right)^2\right\} \\
	L_3 &= (1+(y_{3,1}-y_{3,2})^2)(1+y_{3,1}^{2}y_{3,2}^2(y_{3,1}+y_{3,2})^{2}) \\
	&\qquad -\left(1+h_{4}\left(\mathbf{u}_{3} + \Phi_{1}^{(3)}(\mathbf{y}_3) + \Phi_{2}^{(3)}(\mathbf{y}_3)\right)\right)
	\left(1+h_{12}\left(\mathbf{u}_{3} + \Phi_{1}^{(3)}(\mathbf{y}_3) + \Phi_{2}^{(3)}(\mathbf{y}_3)\right)\right).
	\end{align*}
In the last relation, we use the notations  
\begin{align*}
h_{4}(x_{1},x_{2})&= (x_{1}-x_{2})^{2}, \\
h_{12}(x_{1},x_{2})&= x_{1}^{2}x_{2}^{2}(x_{1}+x_{2})^{2},
\end{align*}
for 
\begin{align*}
x_{1}&=u_{3,1}+  \left( \sum_{h=1}^{3} \A{3}{1}(1,h)\cdot y_{1,h} \right)+\left(\sum_{h=1}^{3} \A{3}{2}(1,h) \cdot y_{2,h} \right),\\
x_{2}&=u_{3,2}+  \left( \sum_{h=1}^{3} \A{3}{1}(2,h)\cdot y_{1,h} \right)+\left(\sum_{h=1}^{3} \A{3}{2}(2,h) \cdot y_{2,h} \right).
\end{align*}
Here, $\A{\ell}{j}(k,h)$ is the mod $3$ reduction of the $(k,h)$-entry of the matrix~$\A{\ell}{j}$.
\end{example}


\end{document}